\newtheorem{theorem}{Theorem}[section]
\newtheorem*{theorem*}{Theorem}
\newtheorem{lemma}{Lemma}[section]
\newtheorem{proposition}{Proposition}[section]
\newtheorem{conjecture}{Conjecture}[section]
\newtheorem{remark}{Remark}
\newtheorem{question}{Question}
\begin{document}
\title{Modified Dirichlet character sums over the \texorpdfstring{$k$}{k}-free integers}
\author{Caio Bueno}
\begin{abstract}
    The main question of this paper is the following: how much cancellation can the partial sums restricted to the $k$-free integers up to $x$ of a $\pm 1$ multiplicative function $f$ be in terms of $x$? Building upon the recent paper by Q. Liu, Acta Math. Sin. (Engl. Ser.) 39 (2023), no. 12, 2316–2328, we prove that under the Riemann Hypothesis for quadratic Dirichlet $L$-functions, we can get $x^{1/(k+1)}$ cancellation when $f$ is a modified quadratic Dirichlet character, i.e., $f$ is completely multiplicative and for some quadratic Dirichlet character $\chi$, $f(p)=\chi(p)$ for all but a finite subset of prime numbers. This improves the conditional results by Aymone, Medeiros and the author cf. Ramanujan J. 59 (2022), no. 3, 713–728.
\end{abstract}
\maketitle
\section{Introduction}\label{Introduction}

\subsection{Main result and recent progress}
We say that $n$ is a $k$-free number ($k\geq 2$) if it is a positive integer such that no prime power $p^k$ divides $n$. Let $\mu^{(k)}$ denote the indicator function of the $k$-free numbers, i.e.
    \begin{align*}
        \mu^{(k)}(n)=
            \begin{cases}
                1       &\text{ if $n$ is a $k$-free number,}\\
                0       &\text{ otherwise.}
            \end{cases} 
    \end{align*}

An arithmetic function $f:\mathbb{N}\to \mathbb{C}$ is said to be multiplicative if $f(mn)=f(m)f(n)$, for any $m,n\in\mathbb{N}$ such that $(m,n)=1$. Moreover, we say that $f$ has support on the $k$-free integers if $f(n)=0$, whenever $n$ is not a $k$-free number.

In \cite{Aymone-Bueno-Medeiros-kfree}, Aymone, Medeiros and the author asked the following question:
    \begin{question}
        For which exponents $\alpha>0$ does there exist a multiplicative function $f$ that satisfies the following properties: $f$ has support on the $k$-free integers, takes values $\pm 1$ at prime numbers, and its partial sums up to $x$ are $\ll x^\alpha$?
    \end{question}

In the present paper we prove the following:
    \begin{theorem}\label{main-theorem}
        Assume the Generalized Riemann hypothesis. There exists a multiplicative function $f:\mathbb{N}\to\{-1,0,+1\}$ with support on the $k$-free integers such that, at prime numbers, $f$ takes values in $\{-1,+1\}$ and we have
            \begin{align*}
                \sum_{n\leq x}f(n)\ll x^{1/(k+1)+\varepsilon},\forall \varepsilon>0.
            \end{align*}
    \end{theorem}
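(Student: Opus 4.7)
The plan is to take $f$ as a $k$-free truncation of a suitably modified quadratic Dirichlet character $\chi$, identify its Dirichlet series as a ratio of $L$-functions, and estimate $\sum_{n\le x}f(n)$ via a smoothed Perron/Mellin contour argument under GRH.

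\textbf{Construction and Dirichlet series.} Fix a non-principal primitive quadratic character $\chi$ modulo $q>1$ and, for each of the finitely many primes $p\mid q$, a sign $\epsilon_p\in\{-1,+1\}$. Define $f$ to be the multiplicative function with $f(p^j)=\chi(p)^j$ for $p\nmid q$, $f(p^j)=\epsilon_p^j$ for $p\mid q$ (in both cases for $0\le j<k$), and $f(p^j)=0$ for $j\ge k$. Then $f$ is supported on the $k$-free integers, takes values in $\{-1,+1\}$ at primes, and agrees with $\chi$ outside the primes dividing $q$. A direct Euler-product computation gives
\begin{equation*}
F(s):=\sum_{n\ge 1}\frac{f(n)}{n^s}=\frac{L(s,\chi)}{L(ks,\chi^k)}\,G_q(s),
\end{equation*}
with $G_q(s)=\prod_{p\mid q}(1-\epsilon_p^k p^{-ks})(1-\epsilon_p p^{-s})^{-1}$ a finite Euler product, holomorphic and bounded in any closed half-plane $\operatorname{Re}(s)\ge\delta>0$. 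Under GRH for quadratic $L$-functions (and the classical Riemann hypothesis when $\chi^k$ is principal), $L(s,\chi)$ is entire and every non-trivial zero of $L(ks,\chi^k)$ lies on $\operatorname{Re}(s)=1/(2k)$; the possible pole of $L(ks,\chi^k)$ at $s=1/k$ (which occurs only in the principal case) contributes a zero, not a pole, of $1/L(ks,\chi^k)$, hence of $F(s)$. So $F(s)$ is holomorphic throughout $\operatorname{Re}(s)>1/(2k)$, in particular on the strip $1/(k+1)+\varepsilon\le\operatorname{Re}(s)\le 1+\varepsilon$ for every $k\ge 2$.

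\textbf{Dyadic smoothing and contour shift.} Split the partial sum dyadically against a smooth partition of unity $1=\sum_{j\in\mathbb{Z}}\eta(2^j y)$ on $(0,\infty)$, where $\eta$ is a fixed smooth non-negative bump of compact support. Up to a boundary correction (produced by a single further smooth cutoff at scale $\delta=x^{-k/(k+1)+\varepsilon}$, which contributes at most $O(\delta x)=O(x^{1/(k+1)+\varepsilon})$), one reduces $\sum_{n\le x}f(n)$ to a sum of dyadic pieces $S_j(x):=\sum_n f(n)\,\eta(2^j n/x)$ for $j=O(\log x)$, and Mellin inversion identifies
\begin{equation*}
S_j(x)=\frac{1}{2\pi i}\int_{(c)}F(s)\,\tilde\eta(s)\,(x/2^j)^s\,ds,
\end{equation*}
where $\tilde\eta(s)=\int_0^\infty\eta(y)\,y^{s-1}\,dy$ decays faster than any polynomial in $|t|$ in vertical strips (because $\eta$ is smooth and compactly supported). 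Shifting each integral to $\operatorname{Re}(s)=1/(k+1)+\varepsilon$ crosses no poles by the previous paragraph, and the Lindelöf-under-GRH bound — combined with the functional equation of $L(s,\chi)$ since $\sigma<1/2$ — gives $|F(\sigma+it)|\ll (|t|+1)^{(k-1)/(2(k+1))+\varepsilon}$; the rapid decay of $\tilde\eta$ absorbs this polynomial growth, yielding $|S_j(x)|\ll (x/2^j)^{1/(k+1)+\varepsilon}$. Summing geometrically over $j$ delivers $\sum_{n\le x}f(n)\ll x^{1/(k+1)+\varepsilon}$.

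\textbf{Main obstacle.} The principal analytic point requiring care is the continuation of $F(s)$ through the strip $1/(2k)<\operatorname{Re}(s)\le 1+\varepsilon$ uniformly in the parity of $k$ (in particular, tracking that the pole of $\zeta(ks)$ at $s=1/k$ in the principal case produces a zero, not a pole, of $F$), and the derivation of the Lindelöf-under-GRH bound for $L(s,\chi)$ on the shifted line via the functional equation. On the combinatorial side, a single smooth cutoff combined with the pointwise Lindelöf bound only yields the weaker exponent $(k+1)/(3k+1)$ (equal to $3/7$ at $k=2$); the dyadic decomposition is precisely what upgrades this to the advertised $1/(k+1)$, by trading the polynomial growth on the shifted line against the rapid decay of the Mellin weight $\tilde\eta$. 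Managing the boundary correction so that it does not destroy the gain is the remaining technical detail; once in place, the dyadic sum and the contour bound combine routinely to give the stated estimate.
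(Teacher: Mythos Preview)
Your construction, Euler-product identity, analytic continuation under GRH, and the Lindel\"of bound $|F(\sigma+it)|\ll|t|^{(k-1)/(2(k+1))+\varepsilon}$ on the line $\sigma=1/(k+1)+\varepsilon$ are all correct, and it is also true that each fixed smooth dyadic piece satisfies $\sum_n f(n)\,\eta(n/X)\ll X^{1/(k+1)+\varepsilon}$. The gap is in the passage from smoothed to sharp sums: the dyadic decomposition does \emph{not} upgrade the exponent as you claim. Summing the pieces $\sum_{j\ge 0}\sum_n f(n)\,\eta(2^j n/x)$ produces $\sum_n f(n)\,\Phi(n/x)$ for a \emph{fixed} smooth cutoff $\Phi$ with $O(1)$-scale transition near $y\asymp 1$; the discrepancy $\sum_{n\le x}f(n)-\sum_n f(n)\,\Phi(n/x)$ is supported on $n\asymp x$ and still carries the sharp jump at $n=x$. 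If you now smooth that jump at scale $\delta$, the trivial boundary cost is indeed $O(\delta x)$, but the remaining smooth boundary bump $\rho$ has derivatives of size $\delta^{-N}$, so its Mellin transform obeys only $\tilde\rho(\sigma+it)\ll(\delta|t|)^{-N}$, and the contour integral on the shifted line contributes $x^{1/(k+1)+\varepsilon}\,\delta^{-(k-1)/(2(k+1))}$. Balancing this against $\delta x$ returns exactly the exponent $(k+1)/(3k+1)$ you yourself flag; the dyadic step has gained nothing, because the entire difficulty was always concentrated at the single boundary scale.

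What the paper exploits, and what is invisible to any argument based only on the size of $F(s)$ on vertical lines, is the arithmetic input $\sum_{b\le X}\chi(b)=O_q(1)$ coming from periodicity. Writing $f=h\ast\chi$ with $h$ the coefficients of $P(s)/\zeta(ks)$ (even $k$) or of $P(s)/(L(ks,\chi)P(ks))$ (odd $k$), one splits at a parameter $y$:
\[
\sum_{n\le x}f(n)=\sum_{a\le y}h(a)\sum_{b\le x/a}\chi(b)\;+\;\sum_{\substack{ab\le x\\ a>y}}h(a)\chi(b)\;=:\;A+B.
\]
The inner character sum in $A$ is $O(1)$, so $A\ll\sum_{a\le y}|h(a)|\ll y^{1/k+\varepsilon}$. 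For $B$ one applies Perron to $L(s,\chi)H_y(s)$ with $H_y(s)=\sum_{a>y}h(a)a^{-s}$, and a separate contour-shift argument under GRH (this is the paper's key proposition) gives $H_y(s)\ll y^{1/(2k)-\sigma+\varepsilon}|t|^\varepsilon$ for $\sigma\ge 1/k+\varepsilon$; combined with the mean-value estimate $\int_{-T}^{T}|L(\tfrac12+it,\chi)|\,|s|^{-1}\,dt\ll(\log T)^{3/2}$ one obtains $B\ll x^{1/2+\varepsilon}y^{1/(2k)-1/2}$. Taking $y=x^{k/(k+1)}$ balances $A$ and $B$ at $x^{1/(k+1)+\varepsilon}$. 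The boundedness of the \emph{sharp} character sum is strictly stronger than anything Lindel\"of yields for $L(s,\chi)$ to the left of the critical line, and the hyperbola splitting is precisely the mechanism that converts it into the saving you are after.
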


The class of functions we will consider are the so called modified Dirichlet characters. We say that a function $g:\mathbb{N}\to \{z\in\mathbb{C} : |z|=1\}$ is a modified Dirichlet character if $g$ is completely multiplicative, i.e., $g(mn)=g(m)g(n),\forall m,n\in\mathbb{N}$, and agrees with some Dirichlet character $\chi$ for all but a finite number of primes.

It was first established by Aymone in \cite{Aymone-2/5} that, if we let $\chi$ be a real, non-principal Dirichlet character of modulus $q$ and $g_\chi$ a modified character related to $\chi$, by defining $f=\mu^2g_\chi$ and assuming the Riemann Hypothesis, we have
    \begin{align*}
        \sum_{n\leq x}f(n)\ll_\varepsilon x^{2/5+\varepsilon},\forall\varepsilon>0,
    \end{align*}
where the subindex in $\ll_\varepsilon$ means that the implicit constant might depend on the parameter $\varepsilon$.

Unconditionally, Aymone proved in the same paper that
    \begin{align*}
        \sum_{n\leq x}f(n)\ll_\delta x^{1/2}\exp(-\delta(\log x)^{1/4}),
    \end{align*}
for some constant $\delta>0$.

Later, adapting the methods used in \cite{Aymone-2/5}, the result was generalized to the $k$-free integers by Aymone, Medeiros and the author. It was shown in \cite{Aymone-Bueno-Medeiros-kfree}, for $f=\mu^{(k)}g_\chi$ and assuming the Riemann Hypothesis for $L$-functions, that
    \begin{align*}
        \sum_{n\leq x}f(n)\ll_{\varepsilon}x^{1/(k+\frac{1}{2})+\varepsilon},\forall\varepsilon>0.
    \end{align*}

Unconditionally, it was obtained
    \begin{align*}
        \sum_{n\leq x}f(n)\ll_\delta x^{1/k}\exp(-\delta (\log x)^{1/4}),
    \end{align*}
for some constant $\delta>0$. We remark that the previous generalization coincides with the result obtained by Aymone in the case when $k=2$ and also that, in the proof of both the unconditional results above, the classical zero-free region for the Riemann zeta function and also for $L$-functions in the $k$-free case, which are essentially the same, was used.

More recently Liu \cite{Liu-1/3} showed, for the squarefree case, that
    \begin{align*}
        \sum_{n\leq x}f(n)\ll_{q,\varepsilon} x^{1/3+\varepsilon},
    \end{align*}
for any $\varepsilon>0$, under the Riemann Hypothesis. For this conditional result, the strategy employed by the author in order to estimate the tail of a Dirichlet series is elegantly adapted from the idea of Montgomery and Vaughan \cite{Distribution-of-2free-montgomeryvaughan} and the tools used are solid. In this paper, we build upon Liu's work to generalize the above result to the $k$-free integers, presenting the proofs in a slightly different manner while keeping the same approach to the problem.

Liu also proved that, unconditionally, there is a constant $\delta>0$ such that
    \begin{align*}
        \sum_{n\leq x}f(n)\ll_{q} x^{1/2}\exp\bigg(\frac{-\delta (\log x)^{3/5}}{(\log\log x)^{1/5}}\bigg).
    \end{align*}
The term inside the exponential was improved by exploring the Vinogradov-Korobov type of zero-free regions for the Riemann zeta function, but otherwise there is no power-saving on the main term compared to the previous results.

    \begin{remark}\label{Conjecture}
        In \cite{Aymone-Bueno-Medeiros-kfree} the authors conjectured that the real magnitude of the partial sums of $f=\mu^{(k)}g_\chi$, up to $x$, is $\ll x^{1/(2k)+\varepsilon}$ and this conjecture remains open.
    \end{remark}

\subsection{Motivation}

Before proceeding, we shall first present the motivation behind this topic.

The study of this type of problem got more traction in recent years after Tao's solution of the Erd\H{o}s Discrepancy Problem (EDP), in 2016 - see \cite{EDP-Tao}. In short, the EDP questioned if the following
    \begin{align}\label{discrepancy-definition}
        \sup_{n,d\in\mathbb{N}}\bigg|\sum_{k=1}^nh(kd)\bigg|
    \end{align}
is infinite or not, for any function $h:\mathbb{N}\to\{-1,1\}$. The supremum \eqref{discrepancy-definition} above is called the discrepancy of the function $h$ and Tao showed in \cite{EDP-Tao} that it is, in fact, infinite.

Partial sums of non-principal Dirichlet characters are particularly interesting in view of the problem presented. In this context, we say that these functions are ``extreme examples" or ``near-counterexamples" to the EDP and this is due to the fact that they constitute a non-trivial function that nearly satisfies the hypothesis of the EDP but still have bounded partial sums, and thus, bounded discrepancy. This can be seen when we recall that $\chi$ is completely multiplicative and apply, for example, the Pólya-Vinogradov inequality. Therefore,
    \begin{align*}
        \bigg|\sum_{k=1}^n \chi(kd)\bigg|=\bigg|\sum_{k=1}^n\chi(d)\chi(k)\bigg|\leq \bigg|\sum_{k=1}^n\chi(k)\bigg|\leq 6\sqrt{q}\log q,
    \end{align*}
for any $d\in\mathbb{N}$ and any fixed modulus $q$ of $\chi$. It follows that the discrepancy of $\chi$ is at most $6\sqrt{q}\log q$. Needless to say, this is not a real counter-example, since $\chi(p)=0$ if $p$ divides the modulus $q$. However, the preceding illustrate how impactful the hypothesis that $|h|=1$ in this context is and, by failling to verify this hypothesis only for a finite subset of primes, we already can have bounded discrepancy.

Historically, understanding how the partial sums of certain multiplicative functions behaves is notably significant. The EDP above is an example and another indication of this importance is the well-known result that the Riemann Hypothesis is equivalent to the assertion that the partial sums of the M\"obius function up to $x$ oscillates at most $x^{1/2+\varepsilon},\forall \varepsilon>0$, as $x$ goes to infinity.

Seemingly motivated by the aforementioned discussion and the fact that the solution of the EDP reduces to completely multiplicative functions that pretends\footnote{Given two multiplicative functions $f$ and $g$ such that $|f|,|g|\leq 1$, we say that $f$ pretends to be $g$ if $\mathbb{D}(f,g,x)^2=\sum_{p\leq x}\frac{1-\Re(f(p)\overline{g(p)})}{p}\ll 1$.} to be a real Dirichlet character, Aymone \cite{Aymone-2/5} proposed the investigation of examples of multiplicative functions supported on the squarefree integers with small partial sums and naturally looked first for functions that are close to Dirichlet characters, the modified characters.

\subsection{Other related results}

Continuing to follow this line of problems, we now briefly comment on other related results. For instance, one could investigate the partial sums of a Dirichlet character over the $k$-free integers or even the partial sums of a modified Dirichlet character itself.

The former was studied by Liu and Zhang \cite{Liu-Zhang-sqrfree-sqrfull} and later by Munsch \cite{Munsch}. Precisely, consider a (general) non-principal Dirichlet character $\chi$ (mod $q$) over the squarefree integers, i.e., $\mu^2\chi$. In \cite{Liu-Zhang-sqrfree-sqrfull}, the authors proved the following mixed bound:
    \begin{align}\label{Liu-Zhang}
        \sum_{n\leq x}\mu^2(n)\chi(n)\ll x^{1/2+\varepsilon}q^{9/44+\varepsilon},
    \end{align}
for any $\varepsilon>0$.

With the aid of two classical results on partial sums of Dirichlet characters, one being the Pólya-Vinogradov inequality, which we already commented above and states that
    \begin{align}\label{Polya-Vinogradov}
        \bigg|\sum_{n\leq x}\chi(n)\bigg|\leq 6\sqrt{q}\log q
    \end{align}
and the other is Burgess inequality (see \cite{Burgess}), which is
    \begin{align}\label{Burgess}
        \bigg|\sum_{n\leq x}\chi(n)\bigg|\ll x^{1/2}q^{3/16+\varepsilon},
    \end{align}
the bound \eqref{Liu-Zhang} was improved by Munsch \cite{Munsch}, where he obtained
    \begin{align*}
        \sum_{n\leq x}\mu^2(n)\chi(n)\ll
            \begin{cases}
                x^{1/2}q^{1/4}(\log q)^{1/2}        &\text{(using \eqref{Polya-Vinogradov})},\\
                x^{1/2}(\log x)q^{3/16+\varepsilon},\forall\varepsilon>0        &\text{(using \eqref{Burgess})},\\
                x^{1/2}(\log x)q^{3/16}(\log q)^{1/2}       &\text{(for $q$ prime, also using \eqref{Burgess})}.
            \end{cases}
    \end{align*}

For the latter case, that is, the study of partial sums of modified characters $g$, Aymone \cite{Aymone-omega-bounds} was interested on omega\footnote{We say that $f(x)=\Omega(g(x))$ if $\limsup_{x\to\infty}|\frac{f(x)}{g(x)}|>0$.} results for this type of sums and it was shown, under some additional hypothesis, that
    \begin{align*}
        \sum_{n\leq x}g(n)=\Omega((\log x)^{|S|}).
    \end{align*}

Since this is a very unexplored topic up to this date, an interested reader can see \cite{Aymone-omega-bounds} and the discussions therein. For omega results on modified Dirichlet characters sums over squarefree integers, see \cite{Aymone-2/5,Klurman-Mangerel-Pohoata-Teravainen,Liu-1/3}.

Going forward, another question that is closely related to the ones already discussed here is the following: is there a multiplicative function $h:\mathbb{N}\to \{-1,1\}$ such that the sum over the squarefree integers $\sum_{n\leq x}\mu^2(n)h(n)$ is uniformly bounded by some constant? Aymone conjectured in \cite{Aymone-edp-cubefree} that, for any such multiplicative function $h$, we have
    \begin{align*}
        \sum_{n\leq x}\mu^2(n)h(n)=\Omega(x^{1/4-\varepsilon}),
    \end{align*}
for any $\varepsilon>0$. More generally, it is believed that
    \begin{conjecture}[Aymone \cite{Aymone-edp-cubefree}]
        If $h:\mathbb{N}\to\{-1,1\}$ is multiplicative, then
            \begin{align*}
                \sum_{n\leq x}\mu^{(k)}(n)h(n)=\Omega(x^{1/(2k)-\varepsilon}),\forall\varepsilon>0.
            \end{align*}
    \end{conjecture}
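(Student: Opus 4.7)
The plan is to attack the conjecture through the analytic properties of the Dirichlet series $F(s) := \sum_{n \ge 1} \mu^{(k)}(n) h(n) n^{-s}$ via a Landau-type oscillation argument. The starting observation is that if the conjecture failed for a given $h$, i.e.\ $S(x) := \sum_{n \le x} \mu^{(k)}(n) h(n) = o(x^{1/(2k) - \varepsilon})$ for some $\varepsilon > 0$, then the representation $F(s) = s \int_1^\infty S(x) x^{-s-1}\, dx$ (valid for $\Re(s) > 1$ by partial summation) would extend to the half-plane $\Re(s) > 1/(2k) - \varepsilon$. Hence $F(s)$ would continue analytically there, and it suffices to exhibit a singularity of $F(s)$ with real part strictly greater than $1/(2k) - \varepsilon$ to reach a contradiction.

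To produce such a singularity, I would introduce the completely multiplicative extension $h^*$ defined by $h^*(p) := h(p)$, and use $h(p)^2 = 1$ together with the identity $\sum_{j=0}^{k-1} h^*(p)^j z^j = (1 - h^*(p)^k z^k)/(1 - h^*(p) z)$ in each local Euler factor. This gives the factorization
\begin{align*}
F(s) \;=\; \frac{L(s, h^*)}{D(s)} \cdot H(s),
\end{align*}
where $D(s) := \zeta(ks)$ when $k$ is even and $D(s) := L(ks, h^*)$ when $k$ is odd, and $H(s) = \prod_p H_p(p^{-s})$ is a correction Euler product accounting for the discrepancy $h(p^j) - h^*(p^j)$ at $j \ge 2$. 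Since $H_p(z) = 1 + O(z^2)$, the product $H(s)$ converges absolutely and is nonvanishing on $\Re(s) > 1/2$. The function $1/D(s)$ carries poles at $s = \rho/k$ for every nontrivial zero $\rho$ of $\zeta$ (or of $L(s, h^*)$), and by the functional equation at least one such $\rho$ satisfies $\Re(\rho) \ge 1/2$, so candidate poles of $F(s)$ exist with $\Re(s) \ge 1/(2k)$; under GRH these lie precisely on the line $\Re(s) = 1/(2k)$, exactly matching the conjectural exponent.

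The main obstacle, and the reason the conjecture remains open, is \emph{cancellation of zeros}: a pole of $1/D(s)$ at $s = \rho/k$ is erased from $F(s)$ whenever $L(s, h^*) H(s)$ vanishes to at least the same order at $s = \rho/k$, and a priori nothing rules this out. Worse, for a generic completely multiplicative $h^*$ with values in $\{\pm 1\}$, the series $L(s, h^*)$ is an Euler product with no known meromorphic continuation past $\Re(s) = 1$, so its zero set is not even well-defined without first importing one from the hypothetical extension of $F$ itself. A realistic strategy would seem to require either (i) a pretentiousness hypothesis in the sense of \cite{Aymone-omega-bounds}, which would force $h^*$ close to an honest Dirichlet character $\chi$ and let $L(s, h^*)$ inherit the analytic properties of $L(s, \chi)$ so that a Landau--Hadamard style zero-separation argument becomes viable, or (ii) a zero-density or second-moment estimate producing more zeros of $D(s)$ in a vertical window of height $T$ than $L(s, h^*) H(s)$ could possibly cancel. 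Even for the squarefree case $k = 2$, the second route appears to demand substantially new analytic input, and it is this cancellation phenomenon that I expect to be the principal obstruction.
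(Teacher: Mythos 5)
The statement here is a \emph{conjecture} attributed to Aymone, and the paper does not prove it --- it is presented as open, so there is no proof in the paper to compare against. You have correctly recognized this yourself: what you submitted is not a proof but a clear account of the natural Landau-type attack and the reasons it stalls, ending with an acknowledgment that the obstruction is real. Your factorization $F(s) = \frac{L(s,h^*)}{D(s)}\,H(s)$, with $D(s)=\zeta(ks)$ or $L(ks,h^*)$ and a correction Euler product $H(s)$ converging on $\Re(s)>1/2$, is exactly the analogue for general multiplicative $h$ of the Euler-product computation the paper carries out for the special case $h = g_\chi$. The two obstructions you flag --- possible cancellation of the poles of $1/D(s)$ at $\rho/k$ by zeros of $L(s,h^*)H(s)$, and the lack of any known meromorphic continuation of $L(s,h^*)$ past $\Re(s)=1$ for a generic $\pm 1$-valued completely multiplicative $h^*$ --- are genuinely the reasons the conjecture is open.

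One technical caveat worth correcting: your appeal to the functional equation to locate a zero $\rho$ with $\Re(\rho)\ge 1/2$ is justified only when $k$ is even, where $D(s)=\zeta(ks)$. For $k$ odd you have $D(s)=L(ks,h^*)$, and for a generic $h^*$ this function has no functional equation and, as you note moments later, no continuation at all; so the claim there is circular and falls under your own obstruction (i). The honest conclusion is the one you reached: the Landau mechanism says where a singularity \emph{should} live, but nothing currently excludes cancellation there without importing pretentiousness or a zero-density statement from outside. This is consistent with the paper's own remarks that the completely multiplicative $h$ case can be handled by adapting existing unboundedness arguments, whereas the merely multiplicative case appears substantially harder.
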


Towards this conjecture, Aymone \cite{Aymone-edp-cubefree} and Klurman, Mangerel, Pohoata and Teräväinen \cite{Klurman-Mangerel-Pohoata-Teravainen} proved, independently and with different methods, that the partial sums over the squarefree integers of any multiplicative function that takes values in $\{-1,1\}$ must be unbounded. Moreover, Aymone also showed that if we now consider a completely multiplicative function taking values in $\{-1,1\}$, its partial sums over the cubefree integers, are unbounded as well.

Finally, as pointed out in \cite{Aymone-edp-cubefree}, if $h:\mathbb{N}\to\{-1,1\}$ is completely multiplicative, one can adapt the proofs contained there to show that the partial sums of $h$ over the $k$-free integers are unbounded. However, when $h$ is only multiplicative, this problem seems much harder.

\section{Notation}

Here we fix a few notations that will be recurrent throughout the paper.

The function $\omega(n)$ will detone the number of distinct prime factors of the integer $n$ and $\pi(x)$ is the classical prime counting function, which gives the number of prime numbers up to a $x$. The letter $p$ will always denote a prime number.

We use $f(x)\ll g(x)$ to say that there exists a constant $C$ and a real number $x_0$ such that $|f(x)|\leq C g(x),\forall x\geq x_0$. Sometimes we write a subindex, e.g. $\ll_\varepsilon$, to indicate that the implicit constant depends on the parameter $\varepsilon$. Moreover, we use $O$ and $\ll$ (resp. $O_\varepsilon$ and $\ll_\varepsilon$) interchangeably. The notation $f(x)=o(g(x))$ means that, for every $\varepsilon>0$, there a constant $x_0$ such that $|f(x)|\leq \varepsilon g(x),\forall x\geq x_0$.

We say that an integer $n$ is $k$-free if it is not divisible by any prime power $p^k$, where $k\in\mathbb{Z}_{\geq 2}$. By saying that a function $f$ has support on the $k$-free integers, we mean that $f(n)=0$ if $n$ is not a $k$-free integer.

The fractional and integer part of a real number $x$ is denoted by $\{x\}$ and $[x]$, respectively. Unless stated otherwise, $s$ is a complex number with real part $\Re(s)\coloneqq\sigma$ and imaginary part $\Im(s)\coloneqq t$.

The cardinality of a set $A$ is denoted by $|A|$.

\section{Main result and discussion about the problem\label{main-result-section}}

This section is dedicated to discuss some subtleties about our proof. We repeat some definitions that were already presented in \Cref{Introduction}.

Since we intend to work with real Dirichlet characters $\chi$, we recall that if $p$ is a prime number that divides the modulus $q$, then, by definition, we have $\chi(p)=0$. Therefore, we define the modified Dirichlet character $g_\chi:\mathbb{N}\to\{-1,1\}$ as the completely multiplicative function with the following rule:
    \begin{align}\label{modified-char-def}
        g_\chi(p)=
            \begin{cases}
                \chi(p)     &\text{ if $p\nmid q$,}\\
                1           &\text{ if $p\mid q$}.\\
            \end{cases}
    \end{align}

We note that the choice of the sign $+1$ when $p\mid q$ is arbitrary, and we are merely following the convention in previous works on the subject. Choosing $-1$ instead would not affect the results presented in this paper and would require only minor modifications in our proofs.
    
Let $k\geq 2$ be an integer and $f=\mu^{(k)}g_\chi$, where $\mu^{(k)}$ is the indicator function of the $k$-free integers. By this definition, we note that $f:\mathbbm{N}\to\{-1,0,1\}$ is a multiplicative function supported on the $k$-free integers.

We recall that the Riemann Hypothesis (RH) for the Riemann zeta function and the Generalized Riemann Hypothesis (GRH) for Dirichlet $L$-functions are, respectively, the conjecture that all the non-trivial zeros of $\zeta(s)$ and of $L(s,\chi)$ have real part equal to $\frac{1}{2}$.

    \begin{remark}
        Note that for \Cref{main-theorem}, we are assuming the RH and the GRH. In fact, we will observe that if $k$ is even, we need to assume only the Riemann Hypothesis and if $k$ is odd, we need only the Generalized Riemann Hypothesis.
    \end{remark}

To prove \Cref{main-theorem}, we follow the strategy of \cite{Liu-1/3} for functions supported on the squarefree integers (in our notation, when $k=2$). So first, we express the generating series of $f$ as more familiar series and we do that by means of the Euler product formula.

Let $F(s)\coloneqq\sum_{n=1}^\infty\frac{f(n)}{n^s}$ be the Dirichlet series associated to the function $f$ defined previously. Since $|f(n)|=|\mu^{(k)}(n)g_\chi(n)|\leq 1, \forall n$, $F(s)$ is absolutely convergent for every $s\in\mathbbm{C}$ such that $\Re(s)>1$. Since $f$ is multiplicative, applying the Euler product formula, we have for $\Re(s)>1$ that
    \begin{align*}
        F(s)&=\prod_p\sum_{k=0}^\infty\frac{f(p^k)}{p^{ks}}=\prod_p\bigg(1+\frac{\mu^{(k)}(p)g_\chi(p)}{p^s}+\frac{\mu^{(k)}(p^2)g_\chi(p^2)}{p^{2s}}+\cdots\bigg)\\
        &=\prod_p\bigg(1+\frac{g_\chi(p)}{p^s}+\frac{g_\chi(p^2)}{p^{2s}}+\cdots+\frac{g_\chi(p^{k-1})}{p^{(k-1)s}}\bigg)\\
        &=\prod_{p\mid q}\bigg(1+\frac{1}{p^s}+\cdots+\frac{1}{p^{(k-1)s}}\bigg)\prod_{p\nmid q}\bigg(1+\frac{\chi(p)}{p^s}+\cdots+\frac{\chi(p^{k-1})}{p^{(k-1)s}}\bigg).
    \end{align*}

Since $(1+\frac{1}{p^s}+\cdots+\frac{1}{p^{(k-1)s}})(1-\frac{1}{p^s})=(1-\frac{1}{p^{ks}})$ and $(1+\frac{\chi(p)}{p^s}+\cdots+\frac{\chi(p^{k-1})}{p^{(k-1)s}})(1-\frac{\chi(p)}{p^s})=(1-\frac{\chi(p)^k}{p^{ks}})$, it follows that
    \begin{align*}
        F(s)&=\frac{\prod_{p\nmid q}(1-\frac{\chi(p)^k}{p^{ks}})}{\prod_{p\nmid q}(1-\frac{\chi(p)}{p^s})}\frac{\prod_{p\mid q}(1-\frac{1}{p^{ks}})}{\prod_{p\mid q}(1-\frac{1}{p^s})}\\
        &=L(s,\chi)P(s)\prod_{p\nmid q}\bigg(1-\frac{\chi(p)^k}{p^{ks}}\bigg)\prod_{p\mid q}\bigg(1-\frac{1}{p^{ks}}\bigg),
    \end{align*}
where $P(s)\coloneqq\prod_{p\mid q}(1-\frac{1}{p^s})^{-1}$, which depends on the modulus $q$.

Now, if $k$ is even and $p\nmid q$, $\chi(p)^k=1$. Then,
    \begin{align*}
        F(s)=L(s,\chi)\frac{P(s)}{\zeta(ks)}.
    \end{align*}

If $k$ is odd, $\chi(p)^k=\chi(p)$. Then,
    \begin{align*}
        F(s)=\frac{L(s,\chi)}{L(ks,\chi)}\frac{P(s)}{P(ks)}.
    \end{align*}

We treat the two cases separately.

A comment on the product $P(s)$ above is due. Note that the function $P(s)$ has simple poles at the imaginary numbers $s=i\frac{2\pi t}{\log p}, t\in\mathbb{Z}\backslash \{0\}, p \mid q$, which are the zeros of $1-\frac{1}{p^s}$ and a pole of order $\omega(q)$ at $s=0$.

We also note that the function $P(s)$ extends analytically to $\Re(s)>0$. Therefore, if the Riemann Hypothesis (resp. Generalized Riemann Hypothesis) is true, the function $\frac{L(s,\chi)P(s)}{\zeta(ks)}$ (resp. $\frac{L(s,\chi)P(s)}{L(ks,\chi)P(ks)}$) has analytic continuation to the half-plane $\{s\in\mathbb{C} : \Re(s)>\frac{1}{2k}\}$. This fact by itself doesn't imply that the conjecture we mentioned in \Cref{Conjecture} is true, but we could still have room to improve the result of \Cref{main-theorem} in a meaningful way.

    \begin{remark}\label{remarkLH}
        To prove \Cref{main-theorem}, the RH and the GRH is crucial in order to stay away from the poles of $1/\zeta(ks)$ and $1/L(ks,\chi)$. One consequence of the RH is the Lindel\"of Hypothesis (LH) and a primary consequence of the latter conjecture is that $\zeta(\frac{1}{2}+it)\ll |t|^\varepsilon,\forall\varepsilon>0$ (in particular for every $\sigma\geq \frac{1}{2}$). As a consequence of the proof that the RH implies the LH, we also obtain the bound $1/\zeta(s)\ll_{\sigma_0,\varepsilon} |t|^\varepsilon,\forall\varepsilon>0$ and every $\Re(s)\geq \sigma_0> 1/2$. The same can be said about the GRH: we have that the GRH implies the LH for $L$-functions, and thus $L(s,\chi),1/L(s,\chi)\ll_{\sigma_0,\varepsilon} |qt|^\varepsilon,\forall\varepsilon>0$ and $\Re(s)\geq \sigma_0>1/2$.
    \end{remark}

\section{Auxiliary lemmas and preliminary results}

An essential tool for our proofs is the famous Perron Formula. The version of Perron's Theorem we will apply is stated in the following lemma:
    \begin{lemma}[\cite{Montgomery-Vaughan}, Theorem 5.1 and Corollary 5.3, pp. 138-140]\label{perron-formula-montgomery-vaughan}
        Let $s=\sigma+it$ and $A(s)=\sum_{n=1}^\infty\frac{a_n}{n^s}$. If $\sigma_0>\max\{0,\sigma_a\}$ and $x>0$, then
            \begin{align*}
                \sum_{n\leq x}a_n=\frac{1}{2\pi i}\int_{\sigma_0-iT}^{\sigma_0+iT}A(s)\frac{x^s}{s}\,ds+R(x),
            \end{align*}
        where
            \begin{align*}
                R(x)\ll \sum_{\substack{x/2<n<2x \\ n\neq x}}|a_n|\min\bigg\{1,\frac{x}{T|x-n|}\bigg\}+\frac{x^{\sigma_0}+4^{\sigma_0}}{T}\sum_{n=1}^\infty\frac{|a_n|}{n^{\sigma_0}}.
            \end{align*}
    \end{lemma}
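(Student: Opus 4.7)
The plan is to reduce the Perron formula to a discontinuous-integral estimate applied termwise to the Dirichlet series for $A(s)$. Since $\sigma_0 > \sigma_a$ and $\sigma_0 > 0$, the series $A(s) = \sum_n a_n/n^s$ converges absolutely on the line $\Re(s) = \sigma_0$, and the integral $\int_{-T}^{T} dt/\sqrt{\sigma_0^2 + t^2}$ is finite, so Fubini permits swapping sum and integral:
\begin{align*}
\frac{1}{2\pi i}\int_{\sigma_0 - iT}^{\sigma_0 + iT} A(s)\frac{x^s}{s}\,ds \;=\; \sum_{n=1}^{\infty} a_n\, J(x/n, T), \quad J(y,T) := \frac{1}{2\pi i}\int_{\sigma_0 - iT}^{\sigma_0 + iT} \frac{y^s}{s}\,ds.
\end{align*}
So it suffices to analyze $J(y, T)$ uniformly in $y > 0$ and recombine.

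The key auxiliary estimate I would establish is the classical discontinuous-integral bound
\begin{align*}
J(y, T) = \mathbbm{1}_{y > 1} + O\!\left(\min\!\left\{1,\; \frac{y^{\sigma_0}}{T\,|\log y|}\right\}\right) \quad (y \neq 1), \qquad J(1, T) = \tfrac{1}{2} + O(\sigma_0/T).
\end{align*}
For $y > 1$, I close the contour with a large rectangle extending to $\Re(s) = -U$ and let $U \to \infty$; the residue at $s = 0$ contributes $1$, while the top, bottom, and left sides are bounded by integrating $|y^s/s| \leq y^\sigma/|t|$, picking up the factor $|\log y|^{-1}$ after integration by parts on the horizontal segments. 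For $0 < y < 1$, I instead close to the right, where the integrand is analytic, so no residue appears and only the boundary contribution survives. The complementary trivial estimate $|J(y,T)| \leq \int_{-T}^{T} y^{\sigma_0}/\sqrt{\sigma_0^2 + t^2}\,dt = O(1)$ supplies the other branch of the minimum.

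Substituting $y = x/n$ and summing over $n$, the indicator contributes $\sum_{n < x} a_n$, which agrees with $\sum_{n \leq x} a_n$ up to the possible integer endpoint $n = x$; this endpoint is absorbed into $R(x)$ via the $J(1, T)$ estimate. The error I split according to the size of $n/x$. When $x/2 < n < 2x$ and $n \neq x$, the Taylor expansion gives $|\log(x/n)| \asymp |x - n|/x$, so the min-bound becomes $\min\{1,\; x/(T|x - n|)\}$, contributing the first sum in $R(x)$. When $n$ lies outside $(x/2, 2x)$, $|\log(x/n)| \geq \log 2$, so the bound reduces to $(x/n)^{\sigma_0}/T$; summing with weights $|a_n|$ contributes $(x^{\sigma_0}/T)\sum_n |a_n|/n^{\sigma_0}$, which is the dominant piece of the tail term. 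The factor $4^{\sigma_0}/T$ accommodates uniform handling of the transitional terms $n \asymp x/2$ and $n \asymp 2x$, where the min still produces $O(1)$ per term but with an implicit constant absorbed into $4^{\sigma_0}$.

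The main obstacle is producing the $\min\{1, \cdot\}$ form of the discontinuous-integral bound, not merely the weaker $y^{\sigma_0}/(T|\log y|)$ estimate: without the $1$ branch, the contribution from integers $n$ extremely close to $x$ would be unbounded, since $|\log(x/n)|$ can be arbitrarily small. This forces the two complementary estimates of $J(y, T)$ --- by contour shift and by the trivial bound --- to be proved and combined. It is also the step that pins down the precise constants and the particular form of $R(x)$ appearing in the statement.
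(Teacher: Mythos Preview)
The paper does not prove this lemma at all; it is simply quoted as a known result with a citation to Montgomery--Vaughan, \emph{Multiplicative Number Theory I}, Theorem~5.1 and Corollary~5.3. Your sketch is essentially the standard textbook argument that appears in that reference: interchange sum and integral by absolute convergence, reduce to the discontinuous integral $J(y,T)$, split according to whether $n$ is near $x$ or far from $x$, and use $|\log(x/n)| \asymp |x-n|/x$ in the near range.

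One technical point deserves correction. Your ``complementary trivial estimate'' $|J(y,T)| \leq \int_{-T}^{T} y^{\sigma_0}/\sqrt{\sigma_0^2 + t^2}\,dt$ is not $O(1)$ but $O(y^{\sigma_0}\log T)$, which is not sharp enough to supply the ``$1$'' branch of the minimum. The usual fix (as in Davenport or Montgomery--Vaughan) is, for $y>1$, to replace the infinite rectangle by the segment $[\sigma_0-iT,\sigma_0+iT]$ together with a circular arc of radius $R=\sqrt{\sigma_0^2+T^2}$ to the left passing through both endpoints; on that arc $|y^s|\le y^{\sigma_0}$ and $|s|=R$, so the arc contributes at most $\pi y^{\sigma_0}$, giving $|J(y,T)-1|\ll y^{\sigma_0}$. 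The analogous arc to the right handles $0<y<1$. With this adjustment your outline matches the cited proof.
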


Now we state a lemma regarding the second moment of $L(s,\chi)$:
    \begin{lemma}\label{second-moment-l-function}
        Let $\chi$ be a real non-principal Dirichlet character modulo $q$ and $\sigma\geq \frac{1}{2}$. We have,
            \begin{align*}
                \int_{-T}^{T}|L(\sigma+it,\chi)|^2\,dt\ll_q T\log T,
            \end{align*}
        as $T\to\infty$.
    \end{lemma}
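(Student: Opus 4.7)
The plan is to combine the approximate functional equation for $L(s,\chi)$ with the Montgomery–Vaughan mean value theorem for Dirichlet polynomials, in the style of the classical second moment bound for $\zeta(1/2+it)$. Since $q$ is fixed, I treat it as a constant and focus on the $T$-dependence.

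First I would dispose of the range $\sigma > 1/2$ by truncation. Partial summation together with the Pólya–Vinogradov inequality applied to $\sum_{n \leq N}\chi(n)$ gives, for any $N$,
\begin{align*}
L(\sigma+it,\chi) = \sum_{n \leq N}\frac{\chi(n)}{n^{\sigma+it}} + O_q\bigl(N^{-\sigma}\bigr).
\end{align*}
Taking $N = T$ and applying the Montgomery–Vaughan inequality
\begin{align*}
\int_{-T}^{T}\Bigl|\sum_{n\leq N} a_n n^{-it}\Bigr|^2\,dt \ll \sum_{n\leq N}(T + O(n))|a_n|^2
\end{align*}
with $a_n = \chi(n) n^{-\sigma}$, produces a bound of order $T\sum_{n \leq T} n^{-2\sigma} + \sum_{n\leq T} n^{1-2\sigma} \ll_q T \log T$ uniformly for $\sigma \geq 1/2$.

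The critical line $\sigma = 1/2$ requires more care because the above truncation becomes inefficient. Here I would invoke the approximate functional equation for $L(s,\chi)$, which for a real non-principal character $\chi\bmod q$ writes
\begin{align*}
L(1/2+it,\chi) = \sum_{n\leq X}\frac{\chi(n)}{n^{1/2+it}} + \varepsilon(\chi,t)\sum_{n\leq Y}\frac{\overline{\chi}(n)}{n^{1/2-it}} + O_q\bigl((|t|+1)^{-A}\bigr),
\end{align*}
for any $X,Y>0$ with $XY \asymp q(|t|+1)$ and $|\varepsilon(\chi,t)|\ll_q 1$. I would perform a dyadic decomposition $T/2 < |t|\leq T$, and on each dyadic block choose $X = Y \asymp \sqrt{qT}$, so that both sums have length comparable and independent of $t$ within the block. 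Applying Montgomery–Vaughan to each of the two Dirichlet polynomials on each block yields a contribution $\ll_q T\log T$ per block, and summing the $O(\log T)$ dyadic blocks from $|t|\leq 1$ up to $|t|\leq T$ gives $\ll_q T\log T$ overall; the small range $|t|\leq 1$ is handled trivially by the continuity of $L(s,\chi)$.

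The main obstacle will be the $\sigma = 1/2$ case, specifically ensuring that the $t$-dependent length of the approximate functional equation does not spoil the clean mean-value bound. Dyadic decomposition of the $t$-integral resolves this by fixing the polynomial length inside each block, after which the Montgomery–Vaughan inequality does all the work; everything else (the error in the AFE, the Pólya–Vinogradov contribution, the $\varepsilon(\chi,t)$ factor) is absorbed into the implicit $q$-dependent constant.
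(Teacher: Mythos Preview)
The paper does not prove this lemma at all; it cites Ramachandra and Rane and uses the bound as a black box. Your sketch is the standard route, and the treatment at $\sigma=1/2$ (approximate functional equation, dyadic decomposition in $t$, Montgomery--Vaughan) is correct. There is, however, a real slip in the $\sigma>1/2$ shortcut: partial summation with P\'olya--Vinogradov gives
\[
L(\sigma+it,\chi)=\sum_{n\le N}\frac{\chi(n)}{n^{\sigma+it}}+O_q\!\Bigl(\frac{1+|t|}{N^{\sigma}}\Bigr),
\]
not $O_q(N^{-\sigma})$, because the Abel-summation integral carries a factor $|s|$. With $N=T$ the squared error integrated over $|t|\le T$ contributes $\asymp T^{3-2\sigma}$, which exceeds $T\log T$ for every $1/2<\sigma<1$, so the truncation argument as written does not actually dispose of that range.

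The repair is already implicit in your own write-up: the approximate functional equation you invoke at $\sigma=1/2$ holds uniformly for $1/2\le\sigma\le 2$, with the dual sum carrying an extra factor $\ll(q|t|)^{1/2-\sigma}\le 1$, so your dyadic-plus-Montgomery--Vaughan argument handles all $\sigma\ge 1/2$ in one stroke. Alternatively, convexity of mean values between $\sigma=1/2$ and, say, $\sigma=2$ also closes the gap.
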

Results of this type (with explicit dependencies on $q$) are proved in \cite{Ramachandra} and later with more precise error terms in \cite{Rane}. For our purposes, we use only the main term, which is sharp and should be enough.

The next result is essentially Lemma 4.3 of \cite{Liu-1/3}.
    \begin{lemma}\label{lema-integral-L}
        Let $\chi$ be a real non-principal Dirichlet character modulo $q$. Uniformly in the strip $1/2\leq\sigma\leq 2$, we have
            \[
                \int_{-T}^T\frac{|L(\sigma+it,\chi)|}{|\sigma+it|} \,dt\ll (\log T)^{3/2},
            \]
        as $T\to \infty$.
    \end{lemma}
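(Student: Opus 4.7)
The plan is to combine a dyadic decomposition of $[-T,T]$ with Cauchy--Schwarz and the second moment bound from \Cref{second-moment-l-function}. A single application of Cauchy--Schwarz on $[-T,T]$ yields only $O_q((T\log T)^{1/2})$, so one must exploit the decay of $1/|\sigma+it|$ by cutting the integral into pieces on which $|\sigma+it|$ is essentially constant.

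First I would dispose of the region $|t|\le 1$. Since $\chi$ is non-principal, $L(s,\chi)$ is entire, hence uniformly bounded (depending on $q$) on the compact box $[1/2,2]\times[-1,1]$; together with $1/|\sigma+it|\le 2$ there, this part contributes $O_q(1)$, uniformly in $\sigma\in[1/2,2]$.

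Next I would cover $1\le|t|\le T$ by dyadic intervals $I_j=[2^j,2^{j+1}]$ and their reflections, for $j=0,1,\dots,J$ with $J=\lfloor\log_2 T\rfloor$. On $I_j$ one has $|\sigma+it|\asymp 2^j$, so
\[
\int_{I_j}\frac{|L(\sigma+it,\chi)|}{|\sigma+it|}\,dt\ll 2^{-j}\int_{I_j}|L(\sigma+it,\chi)|\,dt.
\]
Applying Cauchy--Schwarz on $I_j$ (of length $2^j$) and then \Cref{second-moment-l-function} on $[-2^{j+1},2^{j+1}]$ gives
\[
\int_{I_j}|L(\sigma+it,\chi)|\,dt\le 2^{j/2}\left(\int_{-2^{j+1}}^{2^{j+1}}|L(\sigma+it,\chi)|^2\,dt\right)^{1/2}\ll_q 2^{j/2}\bigl(2^{j+1}\log 2^{j+1}\bigr)^{1/2}\ll 2^j(j+1)^{1/2}.
\]
Hence each dyadic block contributes $\ll (j+1)^{1/2}$. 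Summing,
\[
\sum_{j=0}^{J}(j+1)^{1/2}\asymp J^{3/2}\asymp (\log T)^{3/2},
\]
and the same bound controls the symmetric negative dyadic blocks, yielding the claim.

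The only real subtlety is choosing the right decomposition: the gain in the $(\log T)^{3/2}$ bound comes precisely from balancing the length $|I_j|=2^j$ against the $2^{-j}$ decay of $1/|\sigma+it|$ in each block. Uniformity in $1/2\le\sigma\le 2$ is automatic since \Cref{second-moment-l-function} is uniform in that strip and no step of the argument depends on $\sigma$.
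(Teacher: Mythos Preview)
Your proof is correct, but it is organized differently from the paper's. The paper applies Cauchy--Schwarz once on the whole interval, but with the weight $1/|\sigma+it|$ split between the two factors:
\[
\int_{-T}^{T}\frac{|L(\sigma+it,\chi)|}{|\sigma+it|}\,dt
\le\Bigl(\int_{-T}^{T}\frac{dt}{|\sigma+it|}\Bigr)^{1/2}
\Bigl(\int_{-T}^{T}\frac{|L(\sigma+it,\chi)|^2}{|\sigma+it|}\,dt\Bigr)^{1/2}.
\]
The first factor is $\ll(\log T)^{1/2}$, and the second is handled by integration by parts together with \Cref{second-moment-l-function}, giving $\ll\log T$. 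So your opening remark that ``a single application of Cauchy--Schwarz on $[-T,T]$ yields only $O_q((T\log T)^{1/2})$'' is true only for the \emph{unweighted} Cauchy--Schwarz; the weighted version does the job in one stroke. Your dyadic decomposition is in effect a discrete substitute for that integration-by-parts step: it is slightly longer but perfectly valid, and has the minor advantage of using only the raw second-moment bound without any further manipulation. Both arguments use exactly the same inputs and yield the same exponent.
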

    \begin{proof}
        By Cauchy-Schwarz inequality, we have
            \begin{align*}
                \bigg(\int_{-T}^{T}\frac{|L(\sigma+it,\chi)|}{|\sigma+it|}\,dt\bigg)^2\leq\bigg(\int_{-T}^{T}\frac{1}{|\sigma+it|}\,dt\bigg)\bigg(\int_{-T}^T\frac{|L(\sigma+it,\chi)|^2}{|\sigma+it|}\,dt\bigg).
            \end{align*}

        The first integral on the RHS is $\ll\int_{-T}^T\frac{1}{|\sigma+it|}\,dt\ll \int_1^T\frac{1}{|\sigma+it|}\,dt\ll \log T$.

        For the second integral, using \Cref{second-moment-l-function} and integration by parts, we have
            \begin{align*}
                \int_{-T}^T\frac{|L(\sigma+it,\chi)|^2}{|\sigma+it|}\, dt\ll \frac{T\log T}{T}+(\log T)^2\ll (\log T)^2.
            \end{align*}

        It follows that
            \begin{align*}
                \int_{-T}^T\frac{|L(\sigma+it,\chi)|}{|\sigma+it|}\, dt\ll (\log T)^{1/2}\log T\ll (\log T)^{3/2},
            \end{align*}
        which proves the lemma.
    \end{proof}

An additional lemma that will be needed later in the proof of the main theorem is the following:
    \begin{lemma}\label{lemma-abs-value-inside-sum}
        Let $h(n)$ be the coefficients of the Dirichlet series $\frac{P(s)}{\zeta(ks)}$. We have,
            \begin{align*}
                \sum_{n\leq x}|h(n)|\ll_q x^{1/k}(\log x)^{\pi(q)}.
            \end{align*}
    \end{lemma}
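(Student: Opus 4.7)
The plan is to identify the coefficients $h(n)$ explicitly via the Euler product decomposition and then apply elementary counting. From
\[
    \frac{P(s)}{\zeta(ks)} = \prod_{p \mid q} \frac{1}{1 - p^{-s}} \cdot \prod_p (1 - p^{-ks}),
\]
I would view $\frac{P(s)}{\zeta(ks)}$ as the Dirichlet product of $P(s) = \sum_a p(a) a^{-s}$, where $p(a) = 1$ if every prime factor of $a$ divides $q$ (and $0$ otherwise), with $1/\zeta(ks) = \sum_m \mu(m) m^{-ks}$. Taking the Dirichlet convolution gives
\[
    h(n) = \sum_{a m^k = n} p(a)\,\mu(m),
\]
so $|h(n)|$ is at most the number of representations $n = a m^k$ with $a$ being $q$-smooth in the sense above and $m$ squarefree. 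In particular $|h(n)| \leq 1$ for each such representation.

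Next I would interchange the order of summation to obtain
\[
    \sum_{n \leq x} |h(n)| \leq \sum_{m \leq x^{1/k}} \#\{a \leq x/m^k : a \text{ is } q\text{-smooth}\}.
\]
To count the $q$-smooth integers $a \leq y$, I would use that such an $a$ is determined by the tuple $(v_p(a))_{p \mid q}$ with each $v_p(a) \in \{0, 1, \dots, \lfloor \log y / \log p \rfloor\}$, giving
\[
    \#\{a \leq y : a \text{ is } q\text{-smooth}\} \leq \prod_{p \mid q}\!\left(1 + \tfrac{\log y}{\log p}\right) \ll_q (\log y)^{\omega(q)}.
\]
Since $\omega(q) \leq \pi(q)$, the right-hand side is $\ll_q (\log x)^{\pi(q)}$ uniformly for $y \leq x$. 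Summing trivially over the at most $x^{1/k}$ values of $m$ then yields the stated bound $x^{1/k}(\log x)^{\pi(q)}$.

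This argument is essentially a bookkeeping exercise once the convolution structure is identified, so there is no serious obstacle. The only minor subtlety is picking a decomposition that cleanly separates the $q$-smooth part from the $k$-th-power part; a tighter analysis (using that the $q$-smooth $k$-free part of $n$ lies in a finite set depending on $q$) would actually give the stronger bound $\sum_{n \leq x}|h(n)| \ll_q x^{1/k}$, but the form stated with the extra logarithmic factor is more than enough for the subsequent contour-shifting arguments.
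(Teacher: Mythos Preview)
Your argument is correct and follows essentially the same route as the paper: both identify $h$ as the Dirichlet convolution of $\mathbbm{1}_{\mathcal{N}}$ (the indicator of $q$-smooth integers) with $n\mapsto\mu(d)\mathbbm{1}_{n=d^k}$, swap the order of summation, and bound the count of $q$-smooth integers up to $y$ by $\ll_q(\log y)^{\pi(q)}$. The only cosmetic difference is that the paper obtains this last bound by quoting Granville's estimate $\Psi(x,y)\ll_y(\log x)^{\pi(y)}$ for smooth numbers, whereas you derive it directly via $\prod_{p\mid q}(1+\log y/\log p)\ll_q(\log y)^{\omega(q)}\leq(\log y)^{\pi(q)}$; your version is slightly more self-contained and in fact gives the marginally sharper exponent $\omega(q)$. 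Your closing remark that the Euler product actually yields $\sum_{n\le x}|h(n)|\ll_q x^{1/k}$ (since the $q$-smooth $k$-free part of $n$ ranges over a finite set) is also correct.
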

    \begin{proof}
        Let $\mathcal{N}=\{n : p|n \implies p|q\}$, where $q$ is the modulus of the real non-principal Dirichlet character linked to the function $h(n)$. Defining $\sum_{n=1}^\infty \frac{h(n)}{n^s}=\frac{P(s)}{\zeta(ks)}$, it follows that the coefficients $h(n)$ can be written as the Dirichlet convolution $h(n)=(\nu\ast\mathbbm{1}_{\mathcal{N}})(n)$, where $\nu(n)=
        \begin{cases}
            \mu(d)  &\text{  if $n=d^k$,}\\
            0       &\text{  otherwise.}\\
        \end{cases}$
    
        Then, we have
            \begin{align*}
                |h(n)|=|(\nu\ast\mathbbm{1}_{\mathcal{N}})(n)|\leq \sum_{\substack{a^kb=n \\ b\in \mathcal{N}}}|\mu(a)|\leq\sum_{\substack{a^kb=n \\ b\in \mathcal{N}}}1.
            \end{align*}

        Hence,
            \begin{align}\label{equation-modulus-h}
                \sum_{n\leq x}|h(n)|\leq \sum_{n\leq x}\sum_{\substack{a^kb=n \\ b\in \mathcal{N}}}1=\sum_{a\leq x^{1/k}}\sum_{\substack{b\leq \frac{x}{a^k} \\ b\in\mathcal{N}}}1.
            \end{align}

        In order to estimate the LHS, we need to bound the inner sum on the RHS, that is, sums of the type $\sum_{\substack{n\leq x \\ n \in\mathcal{N}}}1$. Note that if $q=p_1^{\alpha_1}p_2^{\alpha_2}\cdots p_K^{\alpha_K}$ and $n\in\mathcal{N}$, we have that $n$ can be decomposed as $n=p_1^{\beta_1}p_2^{\beta_2}\dots p_r^{\beta_r}$, where $r\leq K$ (although we have no restriction on the size of $\beta_i$ in comparison to $\alpha_i$). Let $S(x,y)$ be the set of integers less than or equal to $x$, with all prime factors $\leq y$. It follows that if $n\in\mathcal{N}\cap [1,x]$, then $n\in S(x,q)$.

        Therefore, the following bound holds $|\mathcal{N}\cap [1,x]|\leq |S(x,q)|$, that is, our sum can be bounded by the cardinality of the set $S(x,q)$. Elements in $S(x,y)$ are called $y$-smooth integers and the number of such elements is denoted in the literature by $\Psi(x,y)$. We also have, by a result of Granville (see \cite{Smooth-numbers}), that
            \begin{align*}
                \Psi(x,y)\ll_y (\log x)^{\pi(y)},
            \end{align*}
        where $\pi(y)$ counts the number of prime numbers up to $y$.
        
        Thus, applying this result to \eqref{equation-modulus-h},
            \begin{align*}
                \sum_{n\leq x}|h(n)|\ll_q\sum_{a\leq x^{1/k}} (\log x)^{\pi(q)}\ll_q x^{1/k}(\log x)^{\pi(q)},
            \end{align*}
        as $x\to\infty$.
    \end{proof}

\section{The main propositions}

As discussed at the end of \Cref{main-result-section}, our strategy is to work separately when $k$ is even or odd. We start with the even case.

\subsection{Proposition when \texorpdfstring{$k$}{k} is even}

Let $\sum_{n=1}^\infty\frac{h(n)}{n^s}$ be the Dirichlet series of the function $\frac{P(s)}{\zeta(ks)}$. For $s=\sigma+it\in\mathbbm{C}$ such that $\sigma\geq \frac{1}{k}+\delta, \delta>0$, define
    \begin{align*}
        H_y(s)=\frac{P(s)}{\zeta(ks)}-\sum_{n\leq y}\frac{h(n)}{n^s},
    \end{align*}
for a fixed $y\geq 1$. Our goal is to give an estimate for the partial sums of this tail, as $y$ goes to infinity, and that naturally is $\sigma$-dependent.

   \begin{proposition}\label{main-prop-keven}
        Let $H_y$ be defined as above and assume the Riemann Hypothesis. We have in the half-plane $\Re(s)\geq \frac{1}{k}+\varepsilon$,
            \begin{align*}
                H_y(s)\ll_{q,\varepsilon} y^{1/(2k)-\sigma+\varepsilon}|t|^\varepsilon,
            \end{align*}
        for any $\varepsilon>0$ and sufficiently large $t$, as $y\to\infty$.
    \end{proposition}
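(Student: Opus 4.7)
The plan is to apply \Cref{perron-formula-montgomery-vaughan} to $\sum_{n\leq y} h(n)/n^s$ and then shift the contour past the simple pole contributed by $1/w$ (whose residue is exactly $F(s) = P(s)/\zeta(ks)$) down to a line just inside the region where RH keeps $1/\zeta(k\cdot)$ analytic. This identifies $H_y(s)$ with a single contour integral along the shifted line plus negligible errors, and the factor $y^{\Re(w)}$ on that line yields the advertised saving $y^{1/(2k)-\sigma+\varepsilon}$.

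Concretely, I would first invoke \Cref{perron-formula-montgomery-vaughan} with coefficients $a_n = h(n)/n^s$, abscissa $\sigma_0 = \varepsilon$, and a truncation $T$ to be chosen as a fixed polynomial in $|t|$, $y$, and $q$, so that
\[\sum_{n\leq y}\frac{h(n)}{n^s}=\frac{1}{2\pi i}\int_{\sigma_0-iT}^{\sigma_0+iT}\frac{P(s+w)}{\zeta(k(s+w))}\cdot\frac{y^w}{w}\,dw+R(y,T,s),\]
with $R$ the explicit Perron error; absolute convergence on $\Re(w)=\sigma_0$ is ensured because $\sigma+\sigma_0>1/k$. I would then shift the contour to $\Re(w)=\sigma_1:=1/(2k)-\sigma+\varepsilon$. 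Under RH, the zeros of $\zeta(k(s+w))$ lie on $\Re(w)=1/(2k)-\sigma$, strictly to the left of $\sigma_1$, while the poles of $P(s+w)$ lie on $\Re(w)=-\sigma<1/(2k)-\sigma$ (since $\sigma>1/(2k)$), also to the left. Hence the only singularity crossed is the simple pole of $1/w$ at $w=0$, whose residue is $F(s)=P(s)/\zeta(ks)$; subtracting this from $\sum_{n\leq y}h(n)/n^s$ gives exactly $-H_y(s)$, up to the shifted vertical integral, two horizontal segments at $\Im(w)=\pm T$, and the Perron error $R$.

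Next, I would bound the shifted vertical integral. On $\Re(w)=\sigma_1$, Lindelöf under RH (see \Cref{remarkLH}) delivers $|1/\zeta(k(s+w))|\ll_\varepsilon (|t|+|\tau|)^\varepsilon$, while $|P(s+w)|=O_{q,k}(1)$ uniformly because $\Re(s+w)=1/(2k)+\varepsilon>0$ keeps every factor $|1-p^{-(s+w)}|^{-1}$ bounded. Therefore
\[\left|\int_{\sigma_1-iT}^{\sigma_1+iT}\frac{P(s+w)}{\zeta(k(s+w))}\cdot\frac{y^w}{w}\,dw\right|\ll_{q,\varepsilon}y^{\sigma_1}\int_{-T}^T\frac{(|t|+|\tau|)^\varepsilon}{|\sigma_1+i\tau|}\,d\tau,\]
and splitting the $\tau$-integral at $|\tau|\asymp |t|$ yields $\ll y^{\sigma_1}(|t|^\varepsilon\log T+T^\varepsilon)$, which after choosing $T$ a fixed polynomial in $y|t|q$ and redefining $\varepsilon$ becomes $\ll y^{1/(2k)-\sigma+\varepsilon}|t|^\varepsilon$.

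Finally, the horizontal segments at $\Im(w)=\pm T$ are handled by the same pointwise bounds and contribute negligibly. The Perron error splits as $R=R_{\text{near}}+R_{\text{tail}}$: the tail piece $\frac{y^{\sigma_0}}{T}\sum_n |h(n)|/n^{\sigma+\sigma_0}$ is $\ll_{q,\varepsilon} y^\varepsilon/T$ by \Cref{lemma-abs-value-inside-sum} together with partial summation, while the near-diagonal piece $\sum_{y/2<n<2y}|h(n)|n^{-\sigma}\min\{1,y/(T|y-n|)\}$ is controlled by splitting at $|y-n|\lessgtr y/T$ and invoking \Cref{lemma-abs-value-inside-sum} in dyadic windows. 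With $T$ taken a sufficiently large polynomial in $y|t|q$, every such error is absorbed into $y^{1/(2k)-\sigma+\varepsilon}|t|^\varepsilon$. The main obstacle I anticipate is keeping the vertical integral free of spurious $T$-dependence: the Lindelöf bound for $1/\zeta$ is only uniform in its own imaginary part, so the split around $|\tau|\asymp|t|$ must be arranged so that only $|t|^\varepsilon$, and not $T^\varepsilon$, survives. A secondary technical nuisance is the near-diagonal Perron error, for which no sharp short-interval bound on $|h(n)|$ is available and one must lean on the min-factor combined with the polynomial choice of $T$.
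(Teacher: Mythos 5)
Your proposal is correct and follows essentially the same route as the paper: apply Perron's formula to $\sum_{n\leq y}h(n)/n^s$, shift the $w$-contour to $\Re(w)=1/(2k)-\sigma+\varepsilon$, pick up the residue $P(s)/\zeta(ks)$ at $w=0$, and bound the shifted integral using the Lindel\"of-type bound for $1/\zeta$ together with the boundedness of $P$. The only differences are cosmetic: the paper takes the initial abscissa $\sigma_0=1$ rather than $\sigma_0=\varepsilon$, and chooses the Perron truncation $V=y^2$ independently of $t$, which avoids your two worries entirely. With $V=y^2$ the vertical integral is just bounded wholesale by $y^\alpha\int_{-V}^V|t+v|^\varepsilon/|\alpha+iv|\,dv\ll y^{1/(2k)-\sigma+\varepsilon}|t|^\varepsilon V^\varepsilon$ (the extra $V^\varepsilon=y^{2\varepsilon}$ is absorbed by relabelling $\varepsilon$), so no split at $|\tau|\asymp|t|$ is needed; and for the near-diagonal Perron error one simply uses $|h(n)|/n^\sigma\ll 1$ for $n\sim y$ to get $R(y)\ll y\log y/V$, so no short-interval or dyadic argument on $|h(n)|$ is required. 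Your version would also close, but these simplifications make the estimate cleaner and remove the $|t|$-dependence from the choice of truncation.
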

    \begin{proof}
        Our aim is to apply Perron's formula to the partial sums up to $y$, of $\frac{P(s+w)}{\zeta(k(s+w))}$, for a fixed $s$. First we note that this Dirichlet series converges absolutely in the half-plane $\{z\in\mathbb{C} : \Re(z)>\frac{1}{k}\}$, therefore, we can use \Cref{perron-formula-montgomery-vaughan} to its partial sums, as long as we integrate over the line segment with real part $\sigma_0>\max\{0,\frac{1}{k}-\sigma\}$. We could go arbitrarily close to the imaginary axis, e.g. $\sigma_0=\delta$, for any $\delta>0$, but that is not necessary.
        
        Let $\sigma\geq\frac{1}{k}+\varepsilon$ and take $\sigma_0=1$, say. If $w=u+iv$, we have
            \begin{align}\label{first-perron-application-keven}
                \sum_{n\leq y}\frac{h(n)}{n^s}=\frac{1}{2\pi i}\int_{1-iV}^{1+iV}\frac{P(s+w)}{\zeta(k(s+w))}\frac{y^w}{w}\,dw+R(y),
            \end{align}
        where
            \begin{align*}
                R(y)\ll \sum_{\substack{\frac{y}{2}<n<2y \\ n\neq y}}\bigg|\frac{h(n)}{n^s}\bigg|\min\bigg\{1,\frac{y}{V|y-n|}\bigg\}+\frac{4+y}{V}\sum_{n=1}^\infty\frac{|h(n)|}{n^{1+\sigma}}.
            \end{align*}
            
        Since $\sigma\geq \frac{1}{k}+\varepsilon$, the second term above is $\ll\frac{y}{V}$. For the first term, when $|n-y|<1$, we choose the first member of the minimum and for every other value of $n$ we choose the second member. Thus, since $\frac{|h(n)|}{n^\sigma}\ll 1$, we have
            \begin{align*}
                R(y)\ll 2+\frac{y}{V}\sum_{1\leq m\leq y}\frac{1}{m}+\frac{y}{V}\ll \frac{y\log y}{V}.
            \end{align*}
        
        Let $\alpha=\frac{1}{2k}-\sigma+\varepsilon$. We will shift the original contour of integration from the vertical line segment $1\pm iV$ to the piecewise linear path that goes from $1+iV$ to $\alpha+iV$, the vertical line segment from $\alpha+iV$ to $\alpha-iV$ and the horizontal line segment from $\alpha -iV$ to $1-iV$.

        Note that, since $\sigma\geq\frac{1}{k}+\varepsilon$, we have over the path of integration that $\Re(k(s+w))\geq k(\sigma+\alpha)=\frac{1}{2}+k\varepsilon$. Since we are assuming the Riemann Hypothesis, the poles of $\frac{1}{\zeta(k(s+w))}$ at the critical strip are located on the line $\{z\in\mathbb{C} : \Re(z)=\frac{1}{2}\}$, so we are avoiding these singularities.
        
        Furthermore, $\Re(s+w)\geq\sigma+\alpha=\frac{1}{2k}+\varepsilon$. So we are also avoinding the poles of the function $P(s+w)$ which, as already discussed, are pure imaginary numbers.

        For $\varepsilon$ small enough, since $k\geq 2$, we have $\alpha<0$. As a consequence of that, we need to factor in the residue at $w=0$ and that is the only singularity we need to take into account when changing the contour.

        Before estimating the integral on the RHS of \eqref{first-perron-application-keven}, we first give a rough bound for $P(s)$. Since $\Re(s+w)>0$, we have
            \begin{align*}
                P(s+w)&=\prod_{p\mid q}\bigg(1-\frac{1}{p^{s+w}}\bigg)^{-1}\ll \prod_{p\mid q}\bigg(1-\frac{1}{p^{1/(2k)+\varepsilon}}\bigg)^{-1}\\
                &\ll \prod_{p\mid q}\bigg(1-\frac{1}{p^\varepsilon}\bigg)^{-1}\ll_\varepsilon \prod_{p\mid q}p^\varepsilon\ll_\varepsilon q^\varepsilon.
            \end{align*}

        Now since the residue at $w=0$ is simply $\frac{P(s)}{\zeta(ks)}$, by Cauchy's Integral Formula and the Residue Theorem, equation \eqref{first-perron-application-keven} can be estimated as
            \begin{align*}
                \frac{P(s)}{\zeta(ks)}-\sum_{n\leq y}\frac{h(n)}{n^s}\ll\bigg(\int_{1+iV}^{\alpha+iV}+\int_{\alpha+iV}^{\alpha-iV}+\int_{\alpha-iV}^{1-iV}\bigg)\frac{P(s+w)}{\zeta(k(s+w))}\frac{y^w}{w}\,dw+R(y).
            \end{align*}

        We use the bound for $1/\zeta$ that comes as a consequence of the proof that the Riemann Hypothesis implies the Lindel\"of Hypothesis (see \Cref{remarkLH}) to estimate both horizontal line segments (the first and third integrals). We have
            \begin{align*}
                \bigg|\int_{1+iV}^{\alpha+iV}\frac{P(s+w)}{\zeta(k(s+w))}\frac{y^w}{w}\,dw\bigg|&\ll_{q,\varepsilon} \frac{|t+V|^\varepsilon}{V}\int_\alpha^1 y^u\,du\ll_{q,\varepsilon} y|t|^\varepsilon V^{\varepsilon-1}.
            \end{align*}

        Over the vertical line segment, we have
            \begin{align*}
                \bigg|\int_{\alpha-iV}^{\alpha+iV}\frac{P(s+w)}{\zeta(k(s+w))}\frac{y^w}{w}\,dw\bigg|\ll_{q,\varepsilon}y^\alpha\int_{-V}^V\frac{|t+v|^\varepsilon}{|\alpha+iv|}\,dv\ll_{q,\varepsilon}y^{1/(2k)-\sigma+\varepsilon}|t|^\varepsilon V^\varepsilon.
            \end{align*}

        Choosing $V=y^2$ and putting the above estimates together, we obtain
            \begin{align*}
                \frac{P(s)}{\zeta(ks)}-\sum_{n\leq y}\frac{h(n)}{n^s}\ll_{q,\varepsilon}y^{1/(2k)-\sigma+3\varepsilon}|t|^\varepsilon+\frac{(y^2|t|)^\varepsilon}{y}+ \frac{\log y}{y} \ll_{q,\varepsilon} y^{1/(2k)-\sigma+3\varepsilon}|t|^\varepsilon,
            \end{align*}
        for any $\varepsilon>0$, as $y\to\infty$.            
    \end{proof}

\subsection{Proposition when \texorpdfstring{$k$}{k} is odd}

The proposition of the case when $k$ is odd is very similar to the preceding, so we pass through it with less details.

Let $\sum_{n=1}^\infty\frac{\Tilde{h}(n)}{n^s}$ be the Dirichlet series of $\frac{P(s)}{L(ks,\chi)P(ks)}$. For a fixed $y\geq 1$ and in the half-plane $\Re(s)\geq\frac{1}{k}+\delta,\delta>0$, define
    \begin{align*}
        \Tilde{H}_y(s)=\frac{P(s)}{L(ks,\chi)P(ks)}-\sum_{n\leq y}\frac{\Tilde{h}(n)}{n^s}.
    \end{align*}
We again are going to estimate the tail above.

Before we continue to the last proposition's proof, we recall a few notations already presented throughout the paper and fix some further. We defined the set $\mathcal{N}=\{n : p\mid n \implies p\mid q\}$ and the following function:
    \begin{align*}
        \nu(n)=
            \begin{cases}
                \mu(d)  &\text{  if $n=d^k$,}\\
                0       &\text{  otherwise.}\\
            \end{cases}
    \end{align*}

Therefore, $P(s)$ is the generating series of $\mathbbm{1}_{\mathcal{N}}(n)$ and $\frac{1}{P(ks)}$ is the generating series of the function $\nu(n)\mathbbm{1}_{\mathcal{N}}(n)$.

For simplicity, if $\Re(s)>\frac{1}{k}$, we call $\psi(n)$ the coefficients of $\frac{1}{L(ks,\chi)}$ and they are given by the following rule:
    \begin{align*}
        \psi(n)=
            \begin{cases}
                \mu(d)\chi(d)   &\text{ when $n=d^k$,}\\
                0               &\text{ otherwise.}
            \end{cases}
    \end{align*}
That is, similar to $\nu(n)$, the function $\psi(n)$ has support on the perfect $k$-power numbers.

Hence, the coefficients $\Tilde{h}(n)$ of the series $\frac{P(s)}{L(ks,\chi)P(ks)}$ are given by the double Dirichlet convolution $( \psi\ast\nu\mathbbm{1}_{\mathcal{N}}\ast\mathbbm{1}_{\mathcal{N}})(n)$.

    \begin{proposition}\label{proposition-kodd}
        Let $\Tilde{H}_s$ be defined as above and assume the Generalized Riemann Hypothesis. For $\Re(s)\geq\frac{1}{k}+\varepsilon$,
            \begin{align*}
                \Tilde{H}_y(s)\ll_{q,\varepsilon}y^{1/(2k)-\sigma+\varepsilon}|t|^\varepsilon,\forall\varepsilon>0,
            \end{align*}
        as $y\to\infty$.
    \end{proposition}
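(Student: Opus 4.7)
The plan is to mirror the proof of \Cref{main-prop-keven} essentially verbatim, with the RH bound $1/\zeta(ks)\ll_{\sigma_0,\varepsilon}|t|^\varepsilon$ replaced by its GRH counterpart $1/L(ks,\chi)\ll_{q,\varepsilon}|qt|^\varepsilon$ from \Cref{remarkLH}, and with the extra finite factor $1/P(ks)$ handled trivially. First I would apply \Cref{perron-formula-montgomery-vaughan} to $\sum_{n\le y}\Tilde{h}(n)/n^s$ along the vertical line $\Re(w)=1$ up to height $V=y^2$, writing it as
\[
    \frac{1}{2\pi i}\int_{1-iV}^{1+iV}\frac{P(s+w)}{L(k(s+w),\chi)P(k(s+w))}\frac{y^w}{w}\,dw+\Tilde{R}(y).
\]

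The main obstacle I anticipate — and the only genuine departure from the even-$k$ argument — is the analog of \Cref{lemma-abs-value-inside-sum} needed to control $\Tilde{R}(y)$. Since $\Tilde{h}=\psi\ast\nu\mathbbm{1}_{\mathcal{N}}\ast\mathbbm{1}_{\mathcal{N}}$ with $|\psi|\le 1$ supported on the perfect $k$-th powers, and since $|\mathcal{N}\cap[1,x]|\le\Psi(x,q)\ll_q(\log x)^{\pi(q)}$ by Granville's bound, I expect a double iteration of the argument from \Cref{lemma-abs-value-inside-sum} to yield $\sum_{n\le x}|\Tilde{h}(n)|\ll_q x^{1/k}(\log x)^{2\pi(q)}$. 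Together with $|\Tilde{h}(n)|/n^\sigma\ll_q 1$ this gives $\Tilde{R}(y)\ll_q y(\log y)^{O_q(1)}/V$, parallel to the even case.

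Next I shift the contour to the rectangle with left side $\Re(w)=\alpha\coloneqq 1/(2k)-\sigma+\varepsilon$, picking up the simple pole at $w=0$ with residue $P(s)/(L(ks,\chi)P(ks))$. No other singularities are crossed: on the new contour $\Re(k(s+w))\ge 1/2+k\varepsilon$, so GRH rules out zeros of $L(k(s+w),\chi)$, while $\Re(s+w)\ge 1/(2k)+\varepsilon>0$ keeps us to the right of the purely imaginary poles of $P(s+w)$ and of $1/P(k(s+w))$. On the shifted contour I would bound $P(s+w)\ll_\varepsilon q^\varepsilon$ exactly as in the even case, the finite product $1/P(k(s+w))$ trivially by $O_q(1)$, and $1/L(k(s+w),\chi)\ll_{q,\varepsilon}|t+v|^\varepsilon$ via GRH. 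The horizontal segments then contribute $\ll_{q,\varepsilon}y|t|^\varepsilon V^{\varepsilon-1}$ and the vertical segment $\ll_{q,\varepsilon}y^{1/(2k)-\sigma+\varepsilon}|t|^\varepsilon V^\varepsilon$, so that the choice $V=y^2$ absorbs the horizontal contribution into the vertical one and yields the claimed bound.
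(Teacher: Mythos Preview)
Your proposal is correct and follows essentially the same approach as the paper: Perron with $\sigma_0=1$, shift to $\alpha=1/(2k)-\sigma+\varepsilon$, GRH for $1/L$, trivial bound $1/P(k(s+w))\ll_q 1$, pick up the residue at $w=0$, choose $V=y^2$. The one overcomplication is your anticipated ``main obstacle'': no analog of \Cref{lemma-abs-value-inside-sum} is needed to control $\Tilde{R}(y)$ (nor was it used for $R(y)$ in \Cref{main-prop-keven}); the crude pointwise bound $|\Tilde{h}(n)|\ll_\varepsilon n^\varepsilon$ together with absolute convergence of $\sum|\Tilde{h}(n)|/n^{1+\sigma}$ already gives $\Tilde{R}(y)\ll y\log y/V$, and the partial-sum bound on $|\Tilde{h}|$ is only needed later in the proof of \Cref{main-theorem}.
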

    \begin{proof}
        We apply Perron's formula with the same choice of parameters as the even case. That is, let $\sigma\geq \frac{1}{k}+\varepsilon$, $w=u+iv$ and we integrate over the line segment with real part $\sigma_0=1$. By \Cref{perron-formula-montgomery-vaughan}, we have
            \begin{align}\label{perron-aplication-kodd}
                \sum_{n\leq y}\frac{\Tilde{h}(n)}{n^s}=\frac{1}{2\pi i}\int_{1-iV}^{1+iV}\frac{P(s+w)}{L(k(s+w),\chi)P(k(s+w))}\frac{y^w}{w}\,dw+R(y).
            \end{align}
        
        Assuming the Generalized Riemann Hypothesis, we have that $\frac{1}{L(k(s+w),\chi)}$ has no poles if $\Re(k(s+w))\geq \frac{1}{2}+\varepsilon$, for any $\varepsilon>0$. So we again let $\alpha=\frac{1}{2k}-\sigma+\varepsilon$ and shift the contour of integration to the piecewise linear path connected by the points: $1+iV$, $\alpha+iV$, $\alpha-iV$ and $1-iV$. It follows that, over the new path of integration, $\Re(k(s+w))\geq\frac{1}{2}+k\varepsilon$.

        Furthermore, as we already discussed, the Generalized Riemann Hypothesis implies the Lindel\"of Hypothesis for Dirichlet $L$-functions (see \cite{Iwaniec-kowalski}, pp. 101 and 116) and we use this when estimating our upper bound for $\eqref{perron-aplication-kodd}$. 

        Now, besides the bound for $P(s+w)$ that we computed in the even case, we note that we have the following:
            \begin{align*}
                \frac{1}{P(k(s+w))}&=\prod_{p\mid q}\bigg(1-\frac{1}{p^{k(s+w)}}\bigg)\ll \prod_{p\mid q}\bigg(1+\frac{1}{p^{\frac{1}{2}+k\varepsilon}}\bigg)\ll \bigg(1+\frac{1}{2^{\frac{1}{2}+k\varepsilon}}\bigg)^{\omega(q)}\ll_q 1.
            \end{align*}

        The estimate for both horizontal line segments is
            \begin{align*}
                \bigg|\int_{\alpha+iV}^{1+iV}\frac{P(s+w)}{L(k(s+w),\chi)P(k(s+w))}\frac{y^w}{w}\,dw\bigg|\ll_{q,\varepsilon}y|t|^\varepsilon V^{\varepsilon-1}.
            \end{align*}

        For the vertical line segment, we have
            \begin{align*}
                \bigg|\int_{\alpha-iV}^{\alpha+iV}\frac{P(s+w)}{L(k(s+w),\chi)P(k(s+w))}\frac{y^w}{w}\,dw\bigg|\ll_{q,\varepsilon}y^{1/(2k)-\sigma+\varepsilon}|t|^\varepsilon V^\varepsilon.
            \end{align*}

        The error term contributes $R(y)\ll\frac{y\log y}{V}$ and, for small enough $\varepsilon$, the residue at $w=0$ is $\frac{P(s)}{L(ks,\chi)P(ks)}$. Choosing $V=y^2$, the above estimates sums up to
            \begin{align*}
                \frac{P(s)}{L(ks,\chi)P(ks)}-\sum_{n\leq y}\frac{\Tilde{h}(n)}{n^s}\ll_{q,\varepsilon} y^{1/(2k)-\sigma+3\varepsilon}|t|^\varepsilon,\forall\varepsilon>0,\forall\Re(s)\geq\frac{1}{k},
            \end{align*}
        as $y\to\infty$.
    \end{proof}

\section{Proof of \texorpdfstring{\Cref{main-theorem}}{}}

\subsection{Proof when \texorpdfstring{$k$}{k} is even}

We begin the proof by splitting the partial sums of the function $f(n)$ and estimating separately.

A small remark: the Dirichlet series of $L(s,\chi)H_y(s)$, defined at first for $\Re(s)>1$, has partial sums given by $\sum_{\substack{ab\leq x \\ a>y}}h(a)\chi(b)$. To see this, letting $\mathcal{A}=\{n : n>y\}$, we have
    \begin{align*}
           L(s,\chi)H_y(s)=L(s,\chi)\sum_{n>y}\frac{h(n)}{n^s}=L(s,\chi)\sum_{n=1}^\infty\frac{h(n)\mathbbm{1}_{\mathcal{A}}(n)}{n^s}=\sum_{n=1}^\infty\frac{(h\mathbbm{1}_{\mathcal{A}}\ast \chi)(n)}{n^s},
    \end{align*}
by the rules of the Dirichlet convolution.

On the other side,
    \begin{align*}
        \sum_{n\leq x}(h\mathbbm{1}_{\mathcal{A}}\ast \chi)(n)=\sum_{n\leq x}\sum_{ab=n}h(a)\mathbbm{1}_{\mathcal{A}}(a)\chi(b)=\sum_{\substack{ab\leq x \\ a>y}}h(a)\chi(b).
    \end{align*}

    \begin{proof}[Proof of \Cref{main-theorem}]
        Let $y<x$ be fixed. Since $\sum_{n=1}^{\infty} \frac{f(n)}{n^s}=L(s,\chi)\frac{P(s)}{\zeta(ks)}$, for $\Re(s)>1$, the coefficients $f(n)$ of this series are given by the Dirichlet convolution $f(n)=(\chi\ast h)(n)=\sum_{ab=n}h(a)\chi(b)$. Then we have
            \begin{align*}
                \sum_{n\leq x}f(n)&\coloneqq \sum_{n\leq x}\mu^{(k)}(n)g_\chi(n)=\sum_{n\leq x}\sum_{ab=n}h(a)\chi(b)=\sum_{ab\leq x}h(a)\chi(b)\\
                &=\sum_{\substack{ab\leq x \\ a\leq y}}h(a)\chi(b)+\sum_{\substack{ab\leq x \\ a>y}}h(a)\chi(b)\\
                &\coloneqq A+B.
            \end{align*}

        \noindent\textit{Estimate for $A$.}
        
        Since $\chi$ has modulus $q$, by periodicity of the character, we have $|\sum_{n\leq x}\chi(n)|\leq q$.

        Now, as \Cref{lemma-abs-value-inside-sum} gives the upper bound $\sum_{n\leq x}|h(n)|\ll x^{1/k}(\log x)^{\pi(q)}$, it follows that
            \begin{align*}
                |A|&\coloneqq\bigg|\sum_{\substack{ab\leq x \\ a\leq y}}h(a)\chi(b)\bigg|\leq\sum_{a\leq y}|h(a)|\bigg|\sum_{b\leq \frac{x}{a}}\chi(b)\bigg|\\
                &\ll_q \sum_{a\leq y}|h(a)|\ll_q y^{1/k}(\log x)^{\pi(q)}\ll_{q,\varepsilon} y^{1/k}x^\varepsilon.
            \end{align*}

        \noindent\textit{Estimate for $B$.}
        
        To estimate $B$ we use \Cref{perron-formula-montgomery-vaughan} again, with $\sigma_0=1+\frac{1}{\log x}$. Therefore, we have
            \begin{align*}
                |B|\coloneqq \bigg|\frac{1}{2\pi i}\int_{\sigma_0-iT}^{\sigma_0+iT}L(s,\chi)H_y(s)\frac{x^s}{s}\, ds+R(x)\bigg|,
            \end{align*}
        where
            \begin{align*}
                R(x)\ll \sum_{\substack{x/2<n<2x \\ n\neq x}}|f(n)|\min\bigg\{1,\frac{x}{T|x-n|}\bigg\}+\frac{4^{\sigma_0}+x^{\sigma_0}}{T}\sum_{n=1}^\infty \frac{|f(n)|}{n^{\sigma_0}}.
            \end{align*}
        Noting that $|f(n)|=|\mu^{(k)}(n)g_\chi(n)|\leq 1,\forall n$, and since $\sigma_0>1$, the second sum on the RHS above is $\ll\sum_{n=1}^\infty\frac{1}{n^{\sigma_0}}$, thus it is $\ll\frac{1}{\sigma_0-1}=\log x$. For the first sum, we follow the same strategy as in the proof of \Cref{main-prop-keven}: when $n$ is close to $x$, i.e. $|n-x|<1$, we choose the first member of the minimum. For the other values of $n$, we choose $\frac{x}{T|x-n|}$ as the minimum. Therefore the contribution of $R(x)$ is:
            \begin{align*}
                R(x)\ll 2+\frac{x}{T}\sum_{1\leq m\leq x}\frac{1}{m}+\frac{x\log x}{T}\ll \frac{x\log x}{T}.
            \end{align*}

        Let $\beta\geq\frac{1}{2}+\varepsilon$ (for $k\geq 3$ we could take precisely $\beta\geq \frac{1}{2}$). We again shift the contour to a piecewise linear path that goes from $\sigma_0+iT$ to $\beta+iT$, the line segment that goes from $\beta+iT$ to $\beta-iT$ and from $\beta-iT$ to $\sigma_0-iT$. Note that we are not passing any singularities of the integrand. Hence, we can estimate each linear path as follows: for the horizontal line segments, since $y<x$, we have
            \begin{align*}
                \bigg|\int_{\sigma_0+iT}^{\beta+iT}L(s,\chi)H_y(s)\frac{x^s}{s}\, ds\bigg|&\ll_{q,\varepsilon} T^{1/4-1}y^{1/(2k)+\varepsilon}T^\varepsilon\int_{\sigma_0}^\beta \bigg(\frac{x}{y}\bigg)^\sigma \,d\sigma\\
                &\ll_{q,\varepsilon}T^{\varepsilon-3/4}y^{1/(2k)+\varepsilon}\bigg(\frac{x}{y}\bigg)^{\sigma_0}\\
                &\ll_{q,\varepsilon}xy^{1/(2k)-\sigma_0+\varepsilon}T^{\varepsilon-3/4},
            \end{align*}
        where we used\footnote{Note that for this part of the theorem, we don't really need the Lindel\"of Hypothesis for Dirichlet $L$-functions. It is enough to use the weaker convexity bound, i.e. $L(s,\chi)\ll (1+|t|)^{1/4}$ (see \cite{Iwaniec-kowalski}, Theorem 5.23, p. 119). However, since for the odd case the Generalized Riemann Hypothesis is essential for our proof, we will apply the Lindel\"of Hypothesis later for the next part of the proof, when possible.} the convexity bound for $L(s,\chi)$ and \Cref{main-prop-keven}.

        For the vertical estimate, using \Cref{lema-integral-L} and \Cref{main-prop-keven}, we have
            \begin{align*}
                \bigg|\int_{\beta-iT}^{\beta+iT}L(s,\chi)H_y(s)\frac{x^s}{s}\, ds\bigg|&\ll_{q,\varepsilon} y^{1/(2k)-\beta+\varepsilon}x^{\beta}\int_{-T}^{T}|t|^\varepsilon \frac{|L(\beta+it,\chi)|}{|\beta+it|}\, dt\\
                &\ll_{q,\varepsilon} y^{1/(2k)-\beta+\varepsilon}x^{\beta}T^\varepsilon(\log T)^{3/2}\\
                &\ll_{q,\varepsilon}x^{\beta}y^{1/(2k)-\beta+\varepsilon}T^{2\varepsilon}.
            \end{align*}

        Combining these estimates and choosing $T=x^2$,
            \begin{align*}
                |B|\ll_{q,\varepsilon} x^{\beta+4\varepsilon}y^{1/(2k)-\beta+\varepsilon}.
            \end{align*}

        \noindent\textit{Estimate for $A+B$.}

        Recalling that we require $y<x$, but otherwise we have freedom to choose $y$ as a function of $x$, we take $y=x^{\frac{2k\beta}{2k\beta+1}}$. Hence,
            \begin{align*}
                \sum_{n\leq x}f(n)\ll_{q,\varepsilon} x^{\frac{2\beta}{2k\beta+1}+\varepsilon}+x^{(\frac{2k\beta}{2k\beta+1})(\frac{1}{2k}-\beta+\varepsilon)+\beta+4\varepsilon}\ll_{q,\varepsilon}x^{\frac{2\beta}{2k\beta+1}+4\varepsilon},
            \end{align*}
        for any $\varepsilon>0$, as $x\to\infty$.
        
        Finally we note that this bound is optimal when $\beta$ is closer to $\frac{1}{2}$. Therefore,
            \begin{align*}
                \sum_{n\leq x}f(n)\ll_{q,\varepsilon}x^{1/(k+1)+4\varepsilon},\forall\varepsilon>0,
            \end{align*}
        and we conclude the proof for the even case.
    \end{proof}

\subsection{Proof when \texorpdfstring{$k$}{k} is odd}

This case is not as straightforward as the even case, since we need a bit more care when dealing with the sum in which $a\leq y$. Otherwise it follows the same steps.

    \begin{proof}[Proof of \Cref{main-theorem}]
    
        Let $y<x$ be fixed. Employing the same method as the previous case, we have
            \begin{align*}
                \sum_{n\leq x}f(n)=\bigg(\sum_{\substack{ab\leq x \\ a\leq y}}+\sum_{\substack{ab\leq x \\ a>y}}\bigg)\Tilde{h}(a)\chi(b)\coloneqq A+B.
            \end{align*}

        \noindent\textit{Estimate for $A$.}
        
        Using the notations we fixed before \Cref{proposition-kodd}, we start with the following:
            \begin{align*}
                |(\nu\mathbbm{1}_\mathcal{N}\ast \mathbbm{1}_\mathcal{N})(n)|=\bigg|\sum_{\substack{a^kb=n \\ a,b\in\mathcal{N}}}\mu(a)\bigg|\leq \sum_{\substack{a^kb=n \\ a,b\in\mathcal{N}}}|\mu(a)|\leq \sum_{\substack{a^kb=n \\ b\in\mathcal{N}}}|\mu(a)|,
            \end{align*}
        where in the last inequality we just relaxed the condition on $a$. Consequently, we have
            \begin{align*}
                \sum_{n\leq x}|(\nu\mathbbm{1}_\mathcal{N}\ast \mathbbm{1}_\mathcal{N})(n)|\leq \sum_{n\leq x}\sum_{\substack{a^kb=n \\ b\in\mathcal{N}}}|\mu(a)|\leq \sum_{a\leq x^{1/k}}\sum_{\substack{b\leq \frac{x}{a^k} \\ b\in \mathcal{N}}}1\ll_q x^{1/k}(\log x)^{\pi(q)}.
            \end{align*}
        Note that the last double sum doesn't differ from the right hand side of \eqref{equation-modulus-h} and it doesn't rely on the parity of $k$. Thus, we obtain the same bound as before.

        We remember that we want to estimate the coefficients of the product of three Dirichlet series, that is, $\frac{P(s)}{L(ks,\chi)P(ks)}$. Using the definition of Dirichlet convolution again, we have
            \begin{align*}
                \sum_{n\leq U}|\Tilde{h}(n)|&\coloneqq\sum_{n\leq U}|(\psi\ast\nu\mathbbm{1}_\mathcal{N}\ast \mathbbm{1}_\mathcal{N})(n)|=\sum_{n\leq U}\bigg|\sum_{md=n}\psi(d)(\nu\mathbbm{1}_\mathcal{N}\ast \mathbbm{1}_\mathcal{N})(m)\bigg|\\
                &\leq \sum_{n\leq U}\sum_{md=n}|\psi(d)||(\nu\mathbbm{1}_\mathcal{N}\ast \mathbbm{1}_\mathcal{N})(m)|=\sum_{d\leq U}|\psi(d)|\sum_{m\leq\frac{U}{d}}|(\nu\mathbbm{1}_\mathcal{N}\ast \mathbbm{1}_\mathcal{N})(m)|\\
                &\ll_q \sum_{d\leq U^{1/k}}|\mu(d)\chi(d)|\bigg(\frac{U}{d^k}\bigg)^{1/k}(\log U)^{\pi(q)}\ll_q U^{1/k}(\log U)^{\pi(q)}\sum_{d\leq U^{1/k}}\frac{1}{d}\\
                &\ll_q U^{1/k}(\log U)^{\pi(q)+1}.
            \end{align*}

        Therefore, for $A$, we have
            \begin{align*}
                |A|\leq \sum_{a\leq y}\bigg|\Tilde{h}(a)\sum_{b\leq \frac{x}{a}}\chi(b)\bigg|\ll_q \sum_{a\leq y}|\Tilde{h}(a)|\ll_{q,\varepsilon} y^{1/k}x^\varepsilon.
            \end{align*}

        \noindent\textit{Estimate for $B$.}
        
        We use Perron's formula again, integrating with $\sigma_0=1+\frac{1}{\log x}$. We have,
            \begin{align*}
                |B|\coloneqq\bigg|\frac{1}{2\pi i}\int_{\sigma_0-iT}^{\sigma_0+iT}L(s,\chi)\Tilde{H}_y(s)\frac{x^s}{s}\,ds+R(x)\bigg|,
            \end{align*}
        where $R(x)\ll\frac{x\log x}{T}$, by the exactly same argument as the even case.

        And just as before, we shift the contour to the path going from $\sigma_0+iT$ to $\beta+iT$, from $\beta+iT$ to $\beta-iT$ and $\beta-iT$ to $\sigma_0-iT$, where $\beta\geq \frac{1}{2}+\varepsilon$ is fixed and estimate the contribution of each line segment together with the error term. We obtain that
            \begin{align*}
                B\ll_{q,\varepsilon} x^{\beta+\varepsilon} y^{1/(2k)-\beta+\varepsilon},
            \end{align*}
        for any $\varepsilon>0$. Here we are omitting the details but we stress that throughout the proof we use the fact that the Generalized Riemann Hypothesis implies the Lindel\"of Hypothesis for $L$-functions, the application of \Cref{proposition-kodd} and we choose the parameter $T$ appropriately.

        \noindent\textit{Estimate for $A+B$.}
        
        Putting the estimates together and taking $y=x^{\frac{2k\beta}{2k\beta+1}}$, we have
            \begin{align*}
                \sum_{n\leq x}f(n)\ll_{q,\varepsilon}x^{\frac{2\beta}{2k\beta+1}+\varepsilon},
            \end{align*}
        for any $\varepsilon>0$.

        Since the above is optimized when $\beta=\frac{1}{2}+\varepsilon$, the result follows.
    \end{proof}
    
\section{Acknowledgments}

The author would like to thank his advisor, Marco Aymone, for his guidance and many helpful suggestions and discussions, and also to the anonymous referee for the suggestions and comments. This project is funded in part by the Coordenação de Aperfeiçoamento de Pessoal de Nível Superior - Brasil (CAPES), FAPEMIG grant Universal no. APQ-00256-23 and by CNPq grant Universal no. 403037 / 2021-2.

\bibliographystyle{siam}
\bibliography{main.bib}

\end{document}